\newcommand{\Spec}{\operatorname{Spec}}
\newcommand{\diffspec}{\operatorname{Spec^\Delta}}
\newcommand{\diffmax}{\operatorname{Max^\Delta}}
\newcommand{\diffsmax}{\operatorname{SMax^\Delta}}
\newtheorem{theorem}{Theorem}
\newtheorem{lemma}[theorem]{Lemma}
\newtheorem{corollary}[theorem]{Corollary}
\newtheorem{statement}[theorem]{Statement}
\newtheorem*{statement*}{Statement}
\newtheorem*{theorem*}{Theorem}
\newtheorem*{lemma*}{Lemma}
\newtheorem*{fact*}{Fact}
\theoremstyle{definition}
\newtheorem*{definition*}{Definition}
\newtheorem*{example*}{Example}
\newtheorem{remark}[theorem]{Remark}
\newtheorem*{remark*}{Remark}
\theoremstyle{remark}
\title{Inheritance of Properties of Spectra}
\author{D.\,V.~Trushin}
\date{}
\begin{document}

\maketitle

\begin{abstract}
A mechanism for the inheritance of properties of spectra by
differential spectra is developed and applied to prove geometric
properties of morphisms of differential algebraic varieties.
\end{abstract}

\section{Introduction}

This paper presents a fundamentally new differential algebraic
technique, called inheritance. We develop an apparatus which makes
it possible to automatically derive certain facts of differential
algebra from their commutative counterparts. Roughly speaking, the
problem is divided into two parts, determining automatic dependences
and proving theorems for the nondifferential case.

The first step is completely described by
Statement~\ref{prop_lemma}. The reduction of a differential problem
to a nondifferential one follows from Statement~\ref{lemmaStuct}.
The main result obtained by our method is Theorem~\ref{GD_theorem}.
Being combined with geometric facts, such as
Statements~\ref{theoremUp} and~\ref{theoremDown},
Theorem~\ref{GD_theorem} makes it possible to prove new interesting
geometric properties of differential algebraic varieties, in
particular, Statement~\ref{open_var} and Theorem~\ref{open_group}.
The latter implies that all dominant morphisms of differential
algebraic groups are open surjective maps.

It is worth mentioning that other authors working in this direction
used mainly the technique of characteristic sets (see, e.g.,
Cassidy's paper~\cite{Cs} or the classical books~\cite{Kl}
and~\cite{Kl2} by Kolchin). We supplement the well-known technique
by ideas of the inheritance method, which yields deeper geometric
results. Some results, such as the local openness of morphisms for
differential algebraic varieties (Statement~\ref{open_var}) are
interesting in themselves, because they demonstrate new geometric
effects; it is these effects that are the object of study in
differential algebraic geometry.

The most nontrivial part of proofs is verifying the relationship
between the flatness of a commutative ring and the geometry of its
differential spectrum described in Theorem~\ref{GD_theorem}. The
reduction of the initial problem to a nondifferential problem by
methods described in Sec.~4 makes it possible to establish more
subtle relationship between differential spectra. However, it
remains unclear how to manage without flatness, which does not
depend on the differential ring structure.

\section{Organization of the paper by sections}

Section~\ref{sec_3} contains a brief review of definitions. In
Sec.~\ref{sec_4}, the main method is explained and basic inherited
properties are introduced. Section~\ref{sec_5} contains direct
corollaries, such as the going-down and going-up theorems, of the
statements proved in Sec.~\ref{sec_4}. To deal with the case of main
interest for us, that is, the case of differentially finitely
generated algebras over a field of characteristic zero, we need a
technical result (Statement~\ref{lemmaStuct}), which we prove in
Sec.~\ref{sec_6}. In the next, seventh, section, its application is
exemplified by proving known facts. In Sec.~\ref{sec_8}, results on
rings are translated into the geometric language of spectra, after
which, in Sec.~\ref{sec_9}, the main theorem is proved in
ring-theoretic and topological terms. Section~\ref{sec_10} is
devoted to locally closed points of differential spectra, which play
a role similar to that played by maximal spectra in algebraic
geometry. In Sec.~\ref{sec_11}, after some necessary preparations,
geometric corollaries for differential algebraic varieties and
groups are stated.

\section{Definitions and explanations}\label{sec_3}

Throughout the paper, by a ring we mean an associative commutative
ring with identity. All ring homomorphisms are identity-preserving.
We assume the knowledge of the material contained in the
books~\cite{AM} and~\cite{Kl}; any notions and notation used but not
defined in the paper can be found in these books.

We briefly explain some notation below. The set of derivations of a
ring is denoted by $\Delta$, and instead of the term ``differential
ring'' we often use the term ``$\Delta$-ring''; other terms are
treated in a similar manner. If $A$ and $B$ are differential rings,
$f\colon A\to B$ is a differential homomorphism between them, and
$\frak a$ and $\frak b$ are ideals in $A$ and $B$, respectively,
then the ideal $Bf(\frak a)$ generated by the image of $f(\frak a)$
in $B$ is called an {\it extension} of the ideal $\frak a$ and
denoted by $\frak a^e$; the ideal $f^{-1}(\frak b)$ is called a {\it
contraction} of $\frak b$ and denoted by $\frak b^c$. Contractions
and extensions of differential ideals are differential ideals. We
denote the set of prime ideals in a ring $A$ by $\Spec A$ and call
it the {\it spectrum} of $A$. Similarly, the set of prime
differential ideals in a differential ring $A$ is denoted by
$\diffspec A$ and called the {\it differential spectrum} of this
ring. By a {\it simple} differential ring we always understand a
differential ring containing no nontrivial differential ideals.

By $f^*$ we denote the map from the spectrum of $B$ to the spectrum
of $A$ defined by $f^*(\frak q)=f^{-1}(\frak q)$. We write
$f^*_\Delta$ for the restriction of $f^*$ to the differential
spectrum. We assume the spaces $\Spec A$ and $\diffspec A$ to be
endowed with the Zariski topology. A differential ring $A$ is called
a {\it Keigher ring} if the radical of any differential ideal $\frak
a$ of $A$ is a differential ideals as well. A differential ring $A$
is said to be a {\it Ritt algebra} if $A$ is an algebra over
$\mathbb Q$. Note that any Ritt algebra is a Keigher ring~\cite{Kpl}

\section{Inheritance of properties}\label{sec_4}

We understand inheritance of properties of spectra as follows. Let
$f\colon A\to B$ be a differential homomorphism of Keigher rings.
Consider a pair of properties
\begin{itemize}
\item $A1$, which is a property of $f$ treated as a homomorphism,
and
\item $A2$, which is a property of $f$ treated as a differential
homomorphism
\end{itemize}
such that ($A1$) implies ($A2$). The idea is to find an appropriate
pair of such properties and reduce a differential fact to a
nondifferential one. As we shall demonstrate in what follows, this
approach is indeed fruitful. We start with looking for suitable
pairs of properties.

The first observation concerning a relationship between prime
spectra and prime differential spectra is as follows.

\begin{statement}\label{lemmaIdeal}
Let $A\to B$ be a differential homomorphism of Keigher rings, and
let $\frak p$ be a prime differential ideal in $A$. Then the
following conditions are equivalent:
\begin{enumerate}
\item $\frak p^{ec}=\frak p$
\item $(f^*)^{-1}(\frak p)\neq\emptyset$
\item $(f^*_{\Delta})^{-1}(\frak p)\neq\emptyset$
\end{enumerate}
\end{statement}
\begin{proof}
The implications (3)$\Rightarrow$(1)$\Rightarrow$(2) follow in an
obvious way from known facts of commutative algebra~\cite[Chap.~3,
Sec.~2, Proposition~3.16]{AM}. Let us show that (2)$\Rightarrow$(3).
According~\cite[Chap.~3, Sec.~2, Exercise~21 (IV)]{AM}, the fiber
$(f^*)^{-1}(\frak p)$ is naturally homeomorphic to $\Spec(B_{\frak
p}/\frak p B_\frak p)$; moreover, it is easy to see that the
differential fiber $(f^*_{\Delta})^{-1}(\frak p)$ is naturally
homeomorphic to $\diffspec(B_{\frak p}/\frak p B_\frak p)$. Since
the fiber is nonempty, it follows that the ring $B_{\frak p}/\frak p
B_\frak p$ is nonzero, and since this ring is Keigher, it follows
that its differential spectrum is nonempty.
\end{proof}

Now, we need a couple more definitions. Let $f\colon A\to B$ be a
ring homomorphism.

We say that $f$ has the {\it going-up property} if, given a chain
$\mathfrak{p}_1\subseteq\ldots\subseteq\mathfrak{p}_n$ with $\frak
p_i\in \Spec f(A)$, any nonempty chain
$\mathfrak{q}_1\subseteq\ldots\subseteq\mathfrak{q}_m$ $(m<n)$ and
$\frak q_i\in \Spec B$ satisfying the condition $\mathfrak{q}_i\cap
f(A)=\mathfrak{p}_i$ for  $(1\leqslant i\leqslant m)$ can be
extended to a chain
$\mathfrak{q}_1\subseteq\ldots\subseteq\mathfrak{q}_n$ satisfying
the condition $\mathfrak{q}_i\cap f(A)=\mathfrak{p}_i$ for
$1\leqslant i\leqslant n$.

We say that $f$ has the {\it going-down property} if, given any
chain $\mathfrak{p}_1\supseteq\ldots\supseteq\mathfrak{p}_n$ with
$\frak p_i\in\Spec f(A)$, any nonempty chain
$\mathfrak{q}_1\supseteq\ldots\supseteq\mathfrak{q}_m$ with $(m<n)$
and $\frak q_i\in \Spec B$ satisfying the condition
$\mathfrak{q}_i\cap f(A)=\mathfrak{p}_i$ for $(1\leqslant i\leqslant
m)$ can be extended to a chain
$\mathfrak{q}_1\supseteq\ldots\supseteq\mathfrak{q}_n$ satisfying
the condition $\mathfrak{q}_i\cap f(A)=\mathfrak{p}_i$ for
$1\leqslant i\leqslant n$.

Replacing chains of ideals in these definitions by chains of
differential ideals, we obtain definitions of the {\it going-up} and
the {\it going-down property for differential ideals}.

\begin{statement}\label{prop_lemma}
Let $f\colon A\to B$ be a differential homomorphism of Keigher
rings, and let $\frak p$ be a prime differential ideal in $A$. Then
the following assertions hold:
\begin{enumerate}
\item\label{fiber_prop} If $(f^*)^{-1}(\frak p)\neq\varnothing$, then $(f^*_{\Delta})^{-1}(\frak
p)\neq\varnothing$;
\item\label{surj_prop} if $f^*$ is surjective, then so is $f^*_{\Delta}$.
\item\label{GUP_prop} if $f$ has the going-up property, then $f$
has the going-up property for differential ideals;
\item\label{GDP_prop} if $f$ has the going-down property, then $f$
has the going-down property for differential ideals.
\end{enumerate}
\end{statement}
\begin{proof}

Assertion~(\ref{fiber_prop}) is a special case of
Statement~\ref{lemmaIdeal}. The second assertion follows directly
from the first applied to all prime differential ideals of $A$.

To prove~(\ref{GUP_prop}), it suffices to consider two prime ideals
$\frak p\subseteq\frak p'$ in $A$ and a prime ideal $\frak q$ in $B$
satisfying the condition $\frak q^c=\frak p$. Applying
Statement~\ref{lemmaIdeal} to the pair of rings $A/\frak p\subseteq
B/\frak q$ and the ideal $\frak p'$, we obtain the required
implication.

Similarly, it is sufficient to prove~(\ref{GDP_prop}) for two prime
ideals $\frak p'\subseteq\frak p$ in $A$ and a prime ideal $\frak q$
in $B$ satisfying the condition $\frak q^c=\frak p$. Applying
Statement~\ref{lemmaIdeal} to the rings $A_{\frak p}\subseteq
B_{\frak q}$ and the ideal $\frak p'$, we obtain the required
implication.
\end{proof}

\section{First applications}\label{sec_5}

Now, we concentrate on the application of the found pairs of
properties. By way of demonstration, we generalize the well-known
Cohen-Seidenberg going-up and going-down theorems
(see~\cite[Chap.~5]{AM}) to Keigher differential rings.

\begin{theorem}[``going-up'']
Suppose that $f\colon A\to B$ is a differential homomorphism of
Keigher rings and $B$ is integral over $A$. Then $f$ has the
going-up property for differential ideals.
\end{theorem}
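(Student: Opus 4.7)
The plan is to reduce this to the classical (non-differential) going-up theorem of Cohen--Seidenberg by invoking the machinery already built in statement~\ref{prop_lemma}. The whole point of the inheriting framework developed in the preceding section is precisely to make such reductions automatic: we identify a non-differential property that passes to the differential setting, verify it in the non-differential category, and then transfer.

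Concretely, I would proceed as follows. First, since $B$ is integral over $A$, the classical Cohen--Seidenberg going-up theorem (see~\cite[Theorem~5.11]{AM}) tells us that $f$, viewed merely as a ring homomorphism, has the going-up property for ordinary prime ideals. Second, $f$ is by hypothesis a differential homomorphism and both $A$ and $B$ are Keigher rings, so statement~\ref{prop_lemma}(\ref{GUP_prop}) applies and immediately yields that $f$ has the going-up property for differential ideals.

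There is essentially no obstacle: the delicate content has already been absorbed into statement~\ref{lemmaIdeal} and statement~\ref{prop_lemma}, where the Keigher hypothesis is used to guarantee that the differential spectrum of a nonzero quotient is nonempty. Once those results are in hand, the theorem is a one-line consequence of classical commutative algebra. The only point one might wish to verify explicitly is that the hypotheses of statement~\ref{prop_lemma} are indeed met, namely that $f$ being a differential homomorphism between Keigher rings is preserved when we pass to the quotients $A/\mathfrak{p}$ and $B/\mathfrak{q}$ invoked inside the proof of statement~\ref{prop_lemma}(\ref{GUP_prop})---but this is routine, since quotients of Keigher rings by differential ideals are again Keigher.
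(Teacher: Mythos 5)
Your proof is correct and follows exactly the route the paper takes: invoke the classical Cohen--Seidenberg going-up theorem from~\cite{AM} for the underlying ring homomorphism, then transfer to differential ideals via statement~\ref{prop_lemma}(\ref{GUP_prop}). The extra remark that quotients of Keigher rings by differential ideals remain Keigher is a sensible check but adds nothing beyond the paper's two-line argument.
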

\begin{proof}
It follows from the going-up Theorem~\cite[Chap.~5, Sec.~2,
Theorem~5.11]{AM} that $f$ has the going-up property. According to
Statement~\ref{prop_lemma}, the homomorphism $f$ has the going-up
property for differential ideals.
\end{proof}

\begin{theorem}[``going-down'']
Suppose that $f\colon A\to B$ is a differential homomorphism of
Keigher rings, $B$ is an integral domain, $f(A)$ is integrally
closed, and $B$ is integral over $A$. Then $f$ has the going-down
property for differential ideals.
\end{theorem}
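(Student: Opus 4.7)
The plan is to mimic exactly the proof of the preceding going-up theorem, replacing the classical going-up theorem by the classical going-down theorem. The hypotheses of the statement---that $B$ is an integral domain, $f(A)$ is integrally closed in $B$, and $B$ is integral over $A$---are precisely the hypotheses of the classical going-down theorem (Atiyah--Macdonald, chapter~5, sec.~2, th.~5.16). So the first step is to invoke that theorem to conclude that $f$, viewed merely as a homomorphism of rings, enjoys the (ordinary) going-down property.

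The second and final step is to appeal to statement~\ref{prop_lemma}, part~(\ref{GDP_prop}): since $f$ is a differential homomorphism of Keigher rings and has the going-down property, it automatically has the going-down property for differential ideals. That is the whole argument.

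There is really no obstacle here: all the work has been done upstream. The nontrivial content sits in statement~\ref{lemmaIdeal} (which shows the fiber over a differential prime is nonempty as soon as it is nonempty classically, by invoking the Keigher hypothesis on $B_{\mathfrak p}/\mathfrak p B_{\mathfrak p}$) and in statement~\ref{prop_lemma}~(\ref{GDP_prop}) (which reduces the two-step chain case for going-down to statement~\ref{lemmaIdeal} applied to the localization $A_{\mathfrak p}\to B_{\mathfrak q}$). Once those are granted, the differential going-down theorem is obtained from its classical counterpart by a single mechanical citation, which is precisely the paradigm that the introduction advertises.
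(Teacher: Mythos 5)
Your proof is correct and follows the paper's own argument exactly: cite the classical going-down theorem of Atiyah--Macdonald to get the ordinary going-down property, then apply statement~\ref{prop_lemma}, condition~(\ref{GDP_prop}), to transfer it to differential ideals. One small slip in your paraphrase: the hypothesis is that $f(A)$ is integrally closed (in its fraction field), not integrally closed in $B$ --- the latter, combined with integrality of $B$ over $A$, would force $f(A)=B$; this does not affect the argument, which matches the paper's.
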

\begin{proof}
It follows from the going-down theorem~\cite[Chap.~5, Sec.~3,
Theorem~5.16]{AM} that $f$ has the going-down property. According to
Statement~\ref{prop_lemma}, $f$ has the going-down property for
differential ideals.
\end{proof}

Proofs of these two theorems for the case of Ritt algebras are
contained in~\cite{Tr}.

The going-up and going-down theorems are not the most interesting
applications of the pairs of properties specified above. The most
interesting case is that of differentially finitely generated
algebra over a field of characteristic zero. To deal with them, we
need a special mechanism for reducing differential problems to
nondifferential ones; such a mechanism is presented in the nest
section.

\section{A structure result}\label{sec_6}

The following statement is merely a somewhat different view of
characteristic sets for differential ideals.

\begin{statement}\label{lemmaStuct}

Suppose that $A$ is a Ritt algebra and $B$ is a differentially
finitely generated integral algebra over $A$. Then there exists an
element $s$ in $B$ for which $B_s=C[y_{\alpha}]$, where $C$ is a
finitely generated algebra over $A$ and $y_\alpha$ is an at most
countable set of variables algebraically independent over $C$.
\end{statement}
\begin{proof}

Let us represent the ring $B$ as the quotient of the ring of
differential polynomials over $A$ by an ideals $\frak p$, i.e., as
$$
B=A\{y_1,\ldots,y_n\}/\frak p.
$$
Consider a characteristic set $F=\{\,f_1,\ldots,f_k\,\}$  of $\frak
p$ with respect of some ranking. We denote the product of all
initials and separants of the set $F$ by $s$. Let $W$ be the set of
all variables on which polynomials from $F$ depend, and let $Z$ be
the set of variables that are not proper derivatives of leaders of
elements of $F$ and do not belong to $W$. It follows
from~\cite[Chap.~I, Sec.~9, Proposition~1]{Kl} that $B_s$ can be
presented in the required form; for the algebra $C$ we take the
subalgebra generated over $A$ by the set $W$ and $1/s$ and for the
family $\{y_\alpha\}$, the images of the set $Z$; we have
$$
C=A[W]_s/(f_1,\ldots,f_k).
$$
\end{proof}

\section{Demonstration}\label{sec_7}

In this section, we demonstrate the application of the method by
more instructive examples. The facts given below are well known;
their various forms can be found in many works. We give these fact
only for demonstration. Thus, Lemma~2 in~~\cite[Chap.~III,
Sec.~3]{Kl2} is an analog of the following theorem in the sense that
we state this theorem for prime differential spectra rather than for
specializations. Yet another version of this result can be found
in~\cite[Chap.~1, Sec.~6, Proposition~2]{Cs}.

\begin{theorem}\label{maintheorem}
Suppose that $A\subseteq B$ are differential integral Ritt algebras
and the ring $B$ is differentially finitely generated over $A$. Then
the ring $A$ contains an element $s$ for which the map
$$
\diffspec B_s\rightarrow\diffspec A_s
$$
is surjective.
\end{theorem}
\begin{proof}

It sufficient to prove the surjectivity of this map for the ring
$B_u$, where $u$ is an element of $B$. Choose an element $u$ as in
Statement~\ref{lemmaStuct}. Let as show that the corresponding map
on prime spectra is surjective; this is sufficient by
Statement~2(2). Indeed, in the notation of
Statement~\ref{lemmaStuct}, the spectrum of $B_u$ is surjectively
mapped onto the spectrum of $C$. It remains to apply the well-known
fact from commutative algebra that there exists an element $s$ in
$A$ for which the map from the spectrum of $C_s$ onto the spectrum
of $A_s$ is surjective (see~\cite[Chap.~5, Exercise~20 and
Theorem~5.10]{AM})
\end{proof}

\begin{corollary}
Let $B$ be a simple differentially finitely generated algebra over a
differential field of characteristic zero, and let $A\subseteq B$ be
an any subalgebra over this field. Then there exists an element $s$
in $A$ for which $A_s$ is simple.
\end{corollary}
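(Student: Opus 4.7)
The plan is to reduce to Theorem~\ref{maintheorem} applied to the inclusion $A\hookrightarrow B$, and then use that the hypothesis "$B$ is simple" collapses $\diffspec B$ to a single point, which is then pushed down to $\diffspec A_s$ via the surjectivity supplied by the theorem.

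First I would verify the hypotheses of Theorem~\ref{maintheorem}. Since $B$ is differentially finitely generated over $k$ and $k\subseteq A$, a set of differential generators of $B/k$ is also a set of differential generators of $B/A$, so $B$ is differentially finitely generated over $A$. Next, $B$ is an integral domain: being a Ritt algebra, $B$ is a Keigher ring, so its nilradical is a differential ideal; simplicity forces this nilradical to be zero. Moreover, in a Ritt algebra a minimal prime over a differential ideal is itself differential, so the minimal primes over $(0)$ are differential, and simplicity again forces $(0)$ to be the only such prime. Thus $B$ is a domain and, in fact, $\diffspec B=\{(0)\}$. As a subalgebra of a domain, $A$ is a domain.

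Applying Theorem~\ref{maintheorem} gives an element $s\in A$ such that $\diffspec B_s\to\diffspec A_s$ is surjective; I can replace $s$ by any nonzero multiple, so I assume $s\neq 0$, hence $s$ is a nonzerodivisor in $B$ and $B_s\neq 0$. Since prime differential ideals of $B_s$ correspond to prime differential ideals of $B$ missing the powers of $s$, we have $\diffspec B_s\subseteq\diffspec B=\{(0)\}$, and by surjectivity $\diffspec A_s\subseteq\{(0)\}$, so $(0)$ is the only prime differential ideal of $A_s$.

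Finally I conclude that $A_s$ is differentially simple: $A_s$ is a Ritt algebra, hence a Keigher ring, so every proper differential ideal is contained in a maximal differential ideal, and in a Keigher ring every maximal differential ideal is prime. Since $(0)$ is the only prime differential ideal of $A_s$, the only proper differential ideal is $(0)$, which is the desired simplicity. The only subtle point in the argument is the first paragraph: ensuring that "simple" in the Ritt-algebra setting forces $B$ to be an actual integral domain so that Theorem~\ref{maintheorem} applies; once this is in hand, the rest is a formal consequence of the surjectivity result.
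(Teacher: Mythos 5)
Your proof is correct and follows the route the paper intends: the corollary is stated as an immediate consequence of Theorem~\ref{maintheorem}, and you supply exactly the details the paper leaves implicit (simplicity plus the Keigher/Ritt property forces $B$ to be a domain with $\diffspec B=\{(0)\}$, surjectivity of $\diffspec B_s\to\diffspec A_s$ then collapses $\diffspec A_s$ to $\{(0)\}$, and maximal differential ideals in a Ritt algebra being prime converts this into differential simplicity of $A_s$). The only cosmetic point is the handling of $s\neq 0$: rather than ``replacing $s$ by a nonzero multiple,'' one should simply note that the $s$ produced in the proof of Theorem~\ref{maintheorem} is nonzero, which is how the paper implicitly reads its own statement.
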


\begin{remark}
In particular, let $K$ be the mentioned field. Then taking $K\{y\}$,
where $y$ is any element of $B$, for the algebra $A$, we see that
any element of a simple differentially finitely generated algebra
over the field is differentially dependent.
\end{remark}

The following assertion is an version of Proposition~7
in~\cite[Chap.~III, Sec.~10]{Kl}

\begin{corollary}\label{ConstColl}
Suppose that $B$ is a simple differentially finitely generated
algebra over a differential field $K$ of characteristic zero, $F$ is
its fraction field, and $C_K$, $C_B$, and $C_F$ are the
corresponding subrings of constants. Then the subring $C_B$ and
$C_F$ coincide and, therefore, are fields; moreover, the extension
of fields  $C_B/C_K$ is algebraic.
\end{corollary}
\begin{proof}
Let us prove that $C_B=C_F$. Take any element $a\in C_F$. The
fraction ideal
$$
(B:a)=\{\,r\in B\mid ar\in B \,\}
$$
is a nontrivial differential ideal; therefore, by virtue of
simplicity, $B$ contains the identity element, i.e., $a$ belongs to
$B$.

Let us show that any element $y$ of $C_F$ is algebraic over $C_K$.
Theorem~\ref{maintheorem} applied to the pair of rings $K\{y\}=K[y]$
and $B$ implies the existence of an element $s\in K[y]$ for which
$K[y]_s$ is a simple differential ring. Therefore, the element $y$
cannot be algebraically independent.
\end{proof}

\begin{remark}
In particular, if the subfield of constants in the field $K$ is
algebraically closed, then no new constants arise in any simple
differentially finitely generated algebra over the field $K$.
\end{remark}

Thus, the technique presented above makes it possible to obtain new
proofs of known facts (both corollaries can be found
in~\cite[Chap.~3, Sec.~10, Proposition~7]{Kl}). What is more
important, this technique also makes possible to strongly advance
the understanding of the geometry of differential spectra and obtain
quite new results. The rest of the paper is devoted to accomplishing
this task.

\section{Geometric analogs}\label{sec_8}

First, we recall the necessary definitions.

A subset $E$ of a topological space $X$ is said to be {\it
constructible} if it can be presented as $E=U_1\cap
V_1\cup\ldots\cup U_n\cap V_n$, where the sets $U_i$ are open and
the sets $V_i$ are closed.

If a map $f\colon Y\to X$ of topological spaces takes any closed set
$E\subseteq Y$ to a closed set $f(E)$, then the map $f$ is said to
be closed.

If a map $f\colon Y\to X$ of topological spaces takes any open set
$E\subseteq Y$  to an open set $f(E)$, then the map $f$ is said to
be open.

\begin{statement}\label{constructtheorem}
Suppose that $A$ is a Ritt algebra, $B$ is differentially finitely
generated over $A$, and $\diffspec A$ is a Noetherian topological
space. Let $f$ denote the natural embedding of the ring $A$ into
$B$, and let $X$ and $Y$ be topological spaces defined by
$X=\diffspec A$ and $Y=\diffspec B$. Then the set $f^*_\Delta(E)$ is
constructible for any constructible set $E\subseteq Y$.
\end{statement}
\begin{proof}

We use the following criterion: A set $E$ of a Noetherian
topological space is constructible if and only if, for any
irreducible closed set $X_0$, either $\overline{E\cap X_0}\neq X_0$
or else $E\cap X_0$ contains a nonempty open subset of $X_0$
(see~\cite[Chap.~7, Exercise~21]{AM}). The required assertion
follows from this criterion and Theorem~\ref{maintheorem}. Indeed,
it sufficient to consider the case $E=U\cap C$, where $U$ is open
and $C$ is closed in $Y$. Replacing the ring $A$ by its homomorphic
image, we can assume that $E$ is open in $Y$. Since $Y$ is
Noetherian, it follows that $E$ is quasi-compact and, therefore, can
be covered by a finitely many sets of the from $\diffspec B_g$. We
have reduced the proof to the case $E=Y$. The constructibility of
$f^*_{\Delta}(Y)$ is proved by the criterion mentioned above. Let
$X_0$ be an irreducible closed subset in $X$ for which the
intersection $f^*_{\Delta}(Y)\cap X_0$ is dense in $X_0$. Taking the
restriction of $f^*_\Delta$ to $(f^*_{\Delta})^{-1}(X_0)$, we can
assume that $X_0=X$. This reduces the proof to the case where $A$ is
an integral domain and the map $f$ is injective. Let
$Y_1,\ldots,Y_n$ be the irreducible components of $Y$. It suffices
to show that the image $f^*_\Delta(Y_i)$ contains a nonempty open
subset of $X$ for some $i$. It remains to apply
Theorem~\ref{maintheorem} to complete the proof.
\end{proof}

\begin{lemma}\label{min_prime_lemma}
Suppose that $f\colon A\to B$ is a ring homomorphism and $\frak p$
is a minimal prime ideal containing $\ker f$. Then the fibre of
$f^*$ over $\frak p$ is nonempty.
\end{lemma}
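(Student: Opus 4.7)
The plan is to reduce the question to the nonemptiness of the spectrum of a localization of $B$ and then exploit the minimality of $\mathfrak{p}$ to identify the contraction.

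Concretely, I would first pass to $A/\ker f$: since $\mathfrak{p}\supseteq \ker f$, the image $\bar{\mathfrak{p}}$ is a minimal prime of $A/\ker f$, the induced map $\bar f\colon A/\ker f\hookrightarrow B$ is injective, and the fibre over $\mathfrak{p}$ corresponds naturally to the fibre over $\bar{\mathfrak{p}}$. So without loss of generality $f$ is injective and $\mathfrak{p}$ is a minimal prime of $A$.

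Next I would localize $B$ at the multiplicative subset $S=f(A\setminus\mathfrak{p})$. The key observation is that $0\notin S$: otherwise there would exist $a\in A\setminus\mathfrak{p}$ with $f(a)=0$, i.e. $a\in\ker f\subseteq\mathfrak{p}$, a contradiction. Since $S$ is multiplicatively closed and does not contain $0$, the ring $S^{-1}B$ is nonzero and therefore has a prime ideal $\mathfrak{Q}$. Its contraction to $B$ is a prime $\mathfrak{q}$ disjoint from $S$, which means $f^{-1}(\mathfrak{q})\subseteq\mathfrak{p}$.

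Finally I would close the argument by invoking minimality: $f^{-1}(\mathfrak{q})$ is a prime of $A$ containing $\ker f$ and contained in $\mathfrak{p}$, so by the minimality of $\mathfrak{p}$ over $\ker f$ we conclude $f^{-1}(\mathfrak{q})=\mathfrak{p}$. The only step requiring any care is this last one: all the preceding construction yields merely the inclusion $f^{-1}(\mathfrak{q})\subseteq\mathfrak{p}$, and it is precisely the hypothesis that $\mathfrak{p}$ is \emph{minimal} among primes containing $\ker f$ that upgrades this inclusion to an equality. Everything else is a routine localization argument, so I expect no serious obstacle.
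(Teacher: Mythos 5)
Your proof is correct and follows essentially the same route as the paper: reduce modulo $\ker f$ to the case of an injective map with $\mathfrak{p}$ a minimal prime, localize $B$ at (the image of) $A\setminus\mathfrak{p}$, and use minimality to see that any prime of the localization contracts exactly to $\mathfrak{p}$. The only cosmetic difference is in how the nonvanishing of $S^{-1}B$ is obtained: the paper localizes the exact sequence $0\to A\to B$ and uses $A_{\mathfrak p}\neq 0$, while you observe directly that $0\notin f(A\setminus\mathfrak p)$, which is equally valid.
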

\begin{proof}
Passing from $A$ to $A/\ker f$, we can assume that $f$ is injective.
Let $S=A\setminus\frak p$. Consider the exact sequence of rings
$$
0\to A\to B
$$
Localizing by $S$, we obtain the exact sequence
$$
0\to S^{-1}A\to S^{-1}B.
$$
It follows that the ring $S^{-1}B$ is nonzero. By virtue of the
minimality of $\frak p$, the spectrum of this ring is naturally
homeomorphic to the fiver over $\frak p$.
\end{proof}

\begin{statement}\label{theoremUp}
Let $f\colon A\to B$ be a homomorphism of Keigher differential
rings. Then the following conditions are equivalent:
\begin{enumerate}
\item\label{th101} $f$ has the going-up property for differential
ideals;
\item\label{th102} $f^*_{\Delta}$ is closed;
\item\label{th103} for any $\frak q\in\diffspec B$, the map $f^*_{\Delta}\colon\diffspec(B/\frak
q)\to\diffspec(A/\frak q^c)$ is surjective.
\end{enumerate}
\end{statement}
\begin{proof}
The equivalence of~(\ref{th101}) and~(\ref{th103}) follows by
definition. Let us prove that~(\ref{th102}) implies~(\ref{th103}).
It suffices to observe that the closure of the set
$f^*_{\Delta}(\diffspec(B/\frak q))$ is the set $\diffspec(A/\frak
p)$, where $\frak p=\frak q^c$; this, together with the closedness
of the map $f^*_{\Delta}$, implies the surjectivity of this map.

To prove that~(\ref{th101}) implies~(\ref{th102}), we apply
Lemma~\ref{min_prime_lemma}. Suppose that $\frak b$ is a
differential ideal in $B$ and $\frak a=\frak b^c$ is its contraction
to $A$. Let us show that, for any prime differential ideal $\frak p$
containing $\frak a$, there exists a prime ideal $\frak q$ (and
hence, there exists a prime differential ideal, see
Statement~\ref{prop_lemma}) containing $\frak b$ and contracted to
$\frak p$. If $\frak p$ is a minimal prime differential ideals
containing $\frak a$, then the required assertion follows from
Lemma~\ref{min_prime_lemma} applied to the pair of rings $A/\frak
a\to B/\frak b$. If $\frak p$ is an arbitrary prime differential
ideal containing $\frak a$, then $\frak p$ contains a minimal such
ideal $\frak p'$. By the above consideration, for this ideal $\frak
p'$, there exists an ideal $\frak q'$ such that $\frak q'^c=\frak
p'$. It remains to apply the going-up property.
\end{proof}

\begin{statement}\label{theoremDown}
Suppose that $A$ is a Ritt algebra, $B$ is differentially finitely
generated over $A$, and $\diffspec A$ is a Noetherian topological
space. Then the following conditions are equivalent:
\begin{enumerate}
\item\label{th201} $f$ has the going-down property for differential
ideals;
\item\label{th202} $f^*_{\Delta}$ is an open map;
\item\label{th203} for any $\frak q\in\diffspec B$, the map $f^*_{\Delta}\colon\diffspec(B_{\frak
q})\to\diffspec(A_{\frak q^c})$ is surjective.
\end{enumerate}
\end{statement}
\begin{proof}
The equivalence of~(\ref{th201}) and~(\ref{th203}) follows by
definition. Let us prove that~(\ref{th202}) implies~(\ref{th203}).
First, we denote the ideal $\frak q^c$ by $\frak p$. Now, note that
$B_\frak q$ is the direct limit of the rings $B_t$ over $t\in
B\setminus\frak q$. Clearly, for Keigher rings, we have
$$
f^*(\diffspec(B_\frak q))=\bigcap\limits_t f^*_{\Delta}(\diffspec
(B_t));
$$
to show this, it suffices to apply Statement~\ref{prop_lemma} to
Exercise~26 in~\cite[Chap.~3]{AM}. Since $\diffspec (B_t)$ is an
open neighborhood of $\frak q$ and the map $f^*_{\Delta}$ is open,
it follows that the set $f^*(\diffspec (B_t))$ is an open
neighborhood of $\frak p$ and, therefore, contains $\diffspec
(A_\frak p)$.

To prove that~(\ref{th201}) implies~(\ref{th202}), we use the
following criterion: A subset $E$ of a Noetherian topological space
is open if and only if, for any irreducible closed set $X_0$ in $X$,
either $E\cap X_0=\emptyset$ or $E\cap X_0$ contains a nonempty open
subset of $X_0$ (see~\cite[Chap.~7, Exercise~22]{AM}). As in
Statement~\ref{constructtheorem}, this reduces the proof to showing
that $E=f^*_{\Delta}(Y)$ is open in $X$. The going-down property
means that if $\frak p\in E$ and $\frak p'\subseteq\frak p$, then
$\frak p'\in E$; in other words, if $X_0$ is an irreducible closed
subset of $X$ intersecting $E$, then $E\cap X_0$ is dense in $X_0$.
Obviously, any constructible set contains a set open in its closure;
therefore, the set $E\cap X_0$ contains a set open in $X_0$. It
follows from the criterion mentioned above that $E$ is open in $X$.
\end{proof}

\section{Inheritance of geometric properties}\label{sec_9}

Using Statements~\ref{prop_lemma} and~\ref{theoremDown}, we can
significantly strengthen Theorem~\ref{maintheorem}.

\begin{theorem}\label{GD_theorem}
Suppose that $A\subseteq B$ are differential integral Ritt algebras
and the ring $B$ is differentially finitely generated over $A$. Then
$B$ contains an element $s$ for which the embedding $A\subseteq B_s$
has the going-down property for differential ideals.
\end{theorem}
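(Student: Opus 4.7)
The plan is to find an element $s\in B$ for which the embedding $A\hookrightarrow B_s$ is flat. Flatness implies the ordinary going-down property on prime ideals (Atiyah--Macdonald, chapter~5), and statement~\ref{prop_lemma}(\ref{GDP_prop}) will then automatically upgrade this to going-down for differential ideals.

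First I would apply statement~\ref{lemmaStuct} to obtain $s_0\in B$ with $B_{s_0}=C[y_\alpha]$, where $C=A[W]_{s_0}/(f_1,\ldots,f_k)$ is a finitely presented $A$-algebra (the set $W$ of derivatives involved in the characteristic set is finite) and $\{y_\alpha\}$ is algebraically independent over $C$. Since $B$ is an integral domain, the same holds for the localization $B_{s_0}$, and hence for the subrings $A$ and $C$.

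Next I would invoke generic freeness for finitely presented algebras over a domain: there exists a nonzero $t\in A$ such that $C_t$ is a free $A_t$-module. Combined with flatness of the localization $A\to A_t$, this makes $C_t$ flat over $A$. Then $B_{s_0t}=C_t[y_\alpha]$ is a polynomial ring over $C_t$, hence free over $C_t$, and so by transitivity flat over $A$. Setting $s=s_0t\in B$, the embedding $A\hookrightarrow B_s$ is flat, so it has going-down for prime ideals, and statement~\ref{prop_lemma}(\ref{GDP_prop}) completes the proof.

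The main obstacle is the use of generic freeness in the required generality: since $A$ is only assumed to be a differential Ritt algebra and need not be Noetherian, the classical Grothendieck form of the theorem does not apply directly, and one must rely on the finite-presentation variant (valid over an arbitrary domain). A more hands-on alternative is to exploit the triangular structure of $C$ coming from the characteristic set: after localizing by the product of initials (already built into $s_0$), each leader $u_i$ satisfies a monic polynomial in $u_i$ over $A[V,u_1,\ldots,u_{i-1}]_{s_0}$, where $V\subseteq W$ is the set of non-leader derivatives. This exhibits $C$ as a free module over the localized polynomial ring $A[V]_{s_0}$, and hence as a flat $A$-module directly, bypassing the appeal to generic freeness.
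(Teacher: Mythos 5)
Your argument is essentially the paper's own proof: it applies statement~\ref{lemmaStuct}, invokes generic freeness to find $t\in A$ with $C_t$ free over $A_t$, concludes that $B_{s_0t}=C_t[y_\alpha]$ is flat over $A$, uses flatness $\Rightarrow$ going-down, and finishes with statement~\ref{prop_lemma}, exactly the chain of steps in the paper (which localizes at $tu$ and cites Matsumura for generic freeness and Atiyah--Macdonald for flat $\Rightarrow$ going-down). Your caveat is well taken and is in fact a refinement rather than a digression: the paper quotes Matsumura's Noetherian form of generic freeness although $A$ need not be Noetherian, and your finite-presentation variant (using $C=A[W]_s/(f_1,\ldots,f_k)$), or the direct freeness argument from the triangular structure of the characteristic set, repairs this gap.
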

\begin{proof}
Choose an element $u$ as in Statement~\ref{lemmaStuct}. The algebra
$B_u$ is a free $C$-module, therefore, a flat $C$-module. Since the
algebra $C$ is finitely generated over $A$, it follows from
Theorem~52 in~\cite[Chap.~8, Sec.~22]{Mu} that there exists an
element $t$ in $A$ for which the algebra $C_t$ is a free
$A_t$-module, i.e., $B_{tu}$ is a flat $A_t$-algebra. Moreover,
$A_t$ is flat $A$-algebra. Now, using Exercise~11
in~\cite[Chap.~5]{AM}, we see that the map of ordinary spectra has
the going-down property, which implies the going-down property for
differential ideals by Statement~\ref{prop_lemma}.
\end{proof}

\begin{corollary}\label{opencorollary}
Suppose that $A\subseteq B$ are differential integral Ritt algebras
and the ring $B$ is differentially finitely generated over $A$.
Suppose also that the differential spectrum of $A$ is Noetherian.
Then $B$ has an element $s$ for which the map $\diffspec
B_s\to\diffspec A$ is open.
\end{corollary}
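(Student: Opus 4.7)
The plan is to combine Theorem~\ref{GD_theorem} with Statement~\ref{theoremDown}, which together give the result almost immediately. First I would invoke Theorem~\ref{GD_theorem} on the pair $A \subseteq B$: the hypotheses match exactly (both are differential Ritt algebras and $B$ is an integral domain differentially finitely generated over $A$), so we obtain an element $s \in B$ such that the embedding $A \subseteq B_s$ has the going-down property for differential ideals.

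Next I would check that $B_s$ still satisfies the hypotheses of Statement~\ref{theoremDown} relative to $A$. The ring $B_s$ is still differentially finitely generated over $A$ (add $1/s$ to a set of differential generators), $A$ is still a Ritt algebra, and $\diffspec A$ is noetherian by assumption. Therefore Statement~\ref{theoremDown} applies to the map $A \to B_s$, and the equivalence \mbox{(\ref{th201})$\Leftrightarrow$(\ref{th202})} converts the going-down property for differential ideals into openness of the induced map $f^*_\Delta \colon \diffspec B_s \to \diffspec A$.

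There is essentially no hard step here: the two earlier results have been precisely tailored to this combination, so the only thing to verify is that the hypotheses transfer from $A \subseteq B$ to $A \subseteq B_s$, which is routine. The conceptual content of the corollary has already been absorbed into Theorem~\ref{GD_theorem} (the flatness/structure argument) and Statement~\ref{theoremDown} (the topological criterion via the noetherian hypothesis).
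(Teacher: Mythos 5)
Your proposal is correct and is exactly the paper's argument: the paper also deduces the corollary immediately from Theorem~\ref{GD_theorem} combined with the equivalence (\ref{th201})$\Leftrightarrow$(\ref{th202}) of Statement~\ref{theoremDown}. Your brief check that the hypotheses of Statement~\ref{theoremDown} transfer to $A\subseteq B_s$ is a routine detail the paper leaves implicit.
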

\begin{proof}
The required assertion follows readily from the preceding theorem
and Statement~\ref{theoremDown}.
\end{proof}

\section{Locally closed points of differential spectra}\label{sec_10}

We denote the set of maximal differential ideals of a ring $A$ by
$\diffmax A$. As in the case of nondifferential rings, this set is
not necessarily dense, all the less very dense, in $\diffspec A$
(see~\cite[Chap.~5, Exercise~26]{AM} for the definitions).Consider
the subset in the differential spectrum defined by
$$
\diffsmax A=\{\frak p\in\diffspec A \mid \exists s\in A:\, (A/\frak
p)_s \mbox{ is simple}\}
$$
This is the set of all locally closed points of the prime
differential spectrum, which is a topological subspace of the
differential spectrum. Our main objective is to show that such sets
are a suitable substitute for differential spectra in the case of
differentially finitely generated algebras over a field of
characteristic zero, and they play a role similar to that played by
maximal spectra in finitely generated algebras.

\begin{statement}\label{dencetheorem}
If $A$ is a differential Keigher ring, then the set $\diffsmax A$ is
a very dense subset in $\diffspec A$.
\end{statement}
\begin{proof}
It is sufficient to show that any prime differential ideal can be
represented as an intersection of ideals from $\diffsmax A$. Indeed,
for an ideal $\frak p$ belonging to $\diffsmax A$, there is nothing
to prove; if an ideal $\frak p$ does not belong bo this set, then,
for any element $t\in A\setminus\frak p$, there exists a maximal
prime differential ideal $\frak q_t$ such that $\frak p\subset \frak
q_t$ and $t\notin\frak q_t$. By construction, we have $\frak q_t\in
\diffsmax A$ and $\frak p=\bigcap\limits_{t\in A\setminus\frak
p}\frak q_t$.
\end{proof}

From now on, we fix a differential field $K$ of characteristic zero.

\begin{statement}\label{smaxcorrect}
If $A$ and $B$ are differentially finitely generated algebras over
the field $K$ and $f\colon A\to B$ is a differential homomorphism,
then
$$
f^*_{\Delta}(\diffsmax B)\subseteq \diffsmax A.
$$
\end{statement}
\begin{proof}
This follows readily from Theorem~\ref{maintheorem}.
\end{proof}

Thus, in the case of differentially finitely generated algebras over
a field, the restriction of the ideal contraction map to the set of
locally closed points of the differential spectrum is well defined.
Moreover, this set has the following universal property.

Let $A$ be a differentially finitely generated algebra over the
field $K$. Consider subsets $X$ in the differential spectrum of $A$
which contain $f^*(\diffmax B)$ for any differentially finitely
generated algebra $B$ and any differential homomorphism $f\colon
A\to B$.

\begin{statement}
Among all sets $X$ specified above there is a minimal set, and this
set coincides with $\diffsmax A$.
\end{statement}
\begin{proof}
The assertion is special case of Theorem~\ref{maintheorem}.
\end{proof}

In the case of differentially finitely generated algebras,
Statements~\ref{theoremUp} and~\ref{theoremDown} can be
significantly strengthened.

\begin{statement}\label{spec_smax_state}
Let $A$ and $B$ be differential algebra over some field, and let
$f\colon A\to B$ be a differential homomorphism. Then
\begin{enumerate}
\item $f$ has the going-up property for differential ideals if and only
if $f^*_{\Delta}$ is a closed map from $\diffsmax B$ to $\diffsmax
A$;
\item $f$ has the going-down property for differential ideals if and only
if $f^*_{\Delta}A $ is an open map from $\diffsmax B$ to $\diffsmax
A$;
\item if a set $E$ is a constructible in $\diffsmax B$, then
the set $f^*_{\Delta}(E)$ is constructible in $\diffsmax A$.
\end{enumerate}
\end{statement}
\begin{proof}

First, we prove the ``only if'' part of assertion (1). If $f$ has
the going-up property for differential ideals, then
Statement~\ref{theoremUp} implies the closedness of the map
$$
f^*_\Delta\colon \diffspec B\to \diffspec A
$$
Let us show that the restriction of this map to $\diffsmax B$ is
closed. Take a differential ideal $\frak b$ in $B$ and let $\frak a$
be its contraction to $A$. If $V(\frak a)\cap\diffsmax B$ is a
closed subset of $\diffsmax B$, then its image is contained in the
set $V(\frak a)\cap\diffsmax A$. Let us show that this set coincides
with the image. Let $\frak m$ be an ideal of $V(\frak
a)\cap\diffsmax A$; then there exists a prime differential ideal
$\frak q\in V(\frak b)$. By the definition of the ideal $\frak m$,
there exists an element $s$ for which $\frak m$ is a maximal ideal
not containing $s$. Consider the set of all differential prime
ideals containing $\frak q$ and not containing $s$. By Zorn's lemma,
there exists a maximal ideal $\frak n$ with these properties. By
construction, the ideal $\frak n$ belongs to $V(\frak b)\cap
\diffsmax B$ and is contracted to $\frak m$.

We proceed to the ``if'' part. Consider the closed subset $V(\frak
b)$ in $\diffspec B$ and let $\frak a$ be a contraction of $\frak
b$. Let us show that the image of $V(\frak b)$ coincides with
$V(\frak a)$, therefore, is closed; according to
Statement~\ref{theoremUp}, this will imply the required assertion.

Let $\frak p\in V(\frak a)$. Since $\diffsmax A$ is very dense in
$\diffspec A$, it follows that
$$
\frak q=\bigcap_{\frak q\subseteq\frak m} \frak m,
$$
where the intersection is over all locally closed points of the
spectrum. For each $\frak m$, there exists an $\frak n$ in $V(\frak
b)$ contracted to $\frak m$. The intersection of all such ideals is
contracted to $\frak q$; therefore, $\frak q^{ec}=\frak q$. The
application of Statement~\ref{lemmaIdeal} completes the proof.

Now, we prove the ``only if'' part of (2). Since $f$ has the
going-down property, it follows from Statement~\ref{theoremDown}
that the map $f^*_\Delta\colon\diffspec B\to \diffspec A$ is open.
For any element $s\in A$, we have
$$
f^*_{\Delta}((\diffspec B)_s)=\bigcup_{i}(\diffspec A)_{u_i}
$$
Let us show that
$$
f^*_{\Delta}((\diffsmax B)_s)=\bigcup_{i}(\diffsmax A)_{u_i}
$$
The inclusion $\subseteq$ is obvious. Let us prove the reverse
inclusion. Suppose that $\frak m$ belongs to $(\diffsmax A)_{u_i}$.
By definition, we can find an element $t$ for which
$f^*_\Delta(\frak q)=\frak m$. Consider the set of prime
differential ideals containing $\frak q$ and not containing $st$. A
maximal element of this set has the required property. Thus, the
image of any basic open set is open; therefore, the image of any
open set is open.

Let us prove ``if'' part. Suppose that, on the contrary,
$$
f^*_{\Delta}((\diffsmax B)_s)=\bigcup_{i}(\diffsmax A)_{u_i}
$$
Let us show that
$$
f^*_{\Delta}((\diffspec B)_s)=\bigcup_{i}(\diffspec A)_{u_i}
$$
First, we prove the inclusion $\subseteq$. Take an element $\frak q$
of $(\diffspec B)_s$. It is contained in some $\frak m$ from
$(\diffsmax B)_s$; hence, the inclusion $\subseteq$ does hold.

Let us prove the inclusion $\supseteq$. According to
Statement~\ref{smaxcorrect}, any ideal
$$
\frak p\in(\diffsmax A)_{u_i}
$$
can be represented as the intersection
$$
\frak p=\bigcap_{\frak p\subseteq \frak m}\frak m
$$
over $\frak m$ belonging to $(\diffsmax A)_{u_i}$. Each $\frak m$
has a preimage $\frak n$ in $(\diffsmax B)_s$. The intersection of
these preimages is contracted to $\frak q$; hence, there exists a
prime differential ideal contracted to $\frak q$.

(3). We set $X=\diffspec A$ and $Y=\diffspec B$. Since $E$ is
constructible, it follows that it has the form
$$
E=(V(\frak b_1)\cap Y_{t_1}\cup\ldots\cup V(\frak b_n)\cap
Y_{t_n})\cap\diffsmax B.
$$
Therefore, the image of the set $V(\frak b_1)\cap
Y_{t_1}\cup\ldots\cup V(\frak b_n)\cap Y_{t_n}$ is constructible as
well (by Statement~\ref{constructtheorem}), i.~e., it has the form
$$
V(\frak a_1)\cap X_{s_1}\cup\ldots\cup V(\frak a_n)\cap X_{s_m}.
$$
Let us show that
$$
f^*_{\Delta}(E)=(V(\frak a_1)\cap X_{s_1}\cup\ldots\cup V(\frak
a_n)\cap X_{s_m})\cap \diffsmax A.
$$
Indeed, the inclusion $\subseteq$ is obvious. Let us prove the
reverse inclusion. Take a locally closed point $\frak m$ of the
differential spectrum of $A$ which belongs to the right-hand side.
There exists a prime differential ideals $\frak p$ contracted to
$\frak m$ and contained in $V(\frak b_1)\cap Y_{t_1}\cup\ldots\cup
V(\frak b_n)\cap Y_{t_n}$. Suppose that this ideal belongs to
$V(\frak b_i)\cap Y_{t_i}$ and let $s$ be an elements for which
$\frak m$ is a maximal prime differential ideal not containing $s$.
Then a maximal prime differential ideal in $B$ not containing $st_i$
is as required. The set of such ideals is nonempty, because $\frak
p$ is a prime differential ideals in $B$ not containing $s$ and
$t_i$.
\end{proof}

\section{Differential algebraic varieties}\label{sec_11}

A detailed information on differential algebraic varieties and
groups can be found in the paper~\cite{Cs}, which contains also all
facts and definitions used here without explanation. We fix a
differentially closed field $K$ of characteristic zero. It follows
from the definition of differential closedness that any simple
differentially finitely generated algebra over $K$ coincides with
$K$. As a consequence, for any differentially finitely generated
algebra $A$ over $K$, we have $\diffmax A=\diffsmax A$. On the other
hand, choosing generators in the algebra $A$, we can represent this
algebra as the quotient ring $A=K\{y_1,\ldots,y_n\}/\frak a$ of the
differential polynomial ring. The ideal $\frak a$ determines a
differential algebraic variety $V(\frak a)$ in $K^n$. The points of
$V(\frak a)$ correspond to differential homomorphisms from $A$ to
$K$. To every such homomorphism we assign its kernel, which is a
maximal differential ideal. The algebra $A$ contains the field $K$,
which is mapped isomorphically onto its quotient by the kernel;
thus, this correspondence determines a bijection between $V(\frak
a)$ and $\diffmax A$.

An example given in~\cite[Chap.~I, Sec.~5, p.~901]{Cs} shows that
regular functions on differential algebraic varieties are not
necessary polynomial. Thus, to apply the results obtained above, we
need the following final draft.

\begin{statement}\label{stat_21}
Suppose that $X$ is a differential algebraic variety, $A=K\{X\}$ is
its coordinate ring, and $f$ is a regular function on $X$. Then $X$
is isomorphic to the graph $Y$ of the function $f$.
\end{statement}
\begin{proof}

Consider the variety  $X\times K$. Its coordinate ring coincides
with the differential polynomial ring $A\{y\}$. In this set, the
graph of the function $f$ is determined by the equation $y-f=0$. The
maps $X\to Y$ and $Y\to X$ defined by $x\mapsto (x,f(x))$ and
$(x,y)\mapsto s$ are the required mutually inverse maps.
\end{proof}

\begin{corollary}
For any morphism of differential algebraic varieties  $\varphi\colon
X\to Y$, there exist coordinate rings $K\{X\}$ and $K\{Y\}$
($K\{X\}$ depends on $\varphi$) and a homomorphism $\phi\colon
K\{Y\}\to K\{X\}$ for which $\varphi=\phi^*$.
\end{corollary}
\begin{proof}

Take any coordinate rings $K\{X\}$ and $K\{Y\}$. The ring
$$
K\{Y\}=K\{y_1,\ldots,y_n\}
$$
is differentially finitely generated over $K$. Consider the
functions $x_i=\varphi^*(y_i)=y_i\circ\varphi$. These are regular
functions on $X$. According to Statement~\ref{stat_21},
$$
K\{X\}\{x_1,\ldots,x_n\}
$$
is a coordinate ring for $X$ with the required properties.
\end{proof}

It is the rime to gather in the corps. Let us translate the
statements which we proved for spectra into the language of
differential algebraic varieties and groups. The following assertion
is the translation of Theorem~\ref{maintheorem}.

\begin{statement}
If $X\to Y$ is a morphism of differential algebraic varieties and
$X$ is irreducible, then the image of $X$ contains an open subset in
its closure.
\end{statement}

Since differential algebraic variety can be identified with the set
of all locally closed points of the differential spectrum of a
coordinate ring, then, by Statement~\ref{spec_smax_state}~(3), we
have the following result.

\begin{statement}
Any morphism of differential algebraic varieties is a constructible
map.
\end{statement}

These two statements are already known (their proofs can be found
in, e.g., \cite[Chap.~1, Sec.~6, Proposition~2]{Cs}); our objective
was to show how elegantly they can be proved by using the new
technique. The following two results describe new geometric
properties of differential algebraic varieties.
Corollary~\ref{opencorollary}, together with
Statement~\ref{spec_smax_state}, implies the following assertion.

\begin{statement}\label{open_var}
If $\varphi\colon X\to Y$ is a dominant morphism of irreducible
differential algebraic variety, then the restriction of $\varphi$ to
some open set $U\subseteq X$ is open.
\end{statement}

Assertion of this kind imply interesting results in the case of
groups. Indeed, let us prove the following statement.

\begin{theorem}\label{open_group}
If $\varphi\colon G\to H$ is a dominant homomorphism of differential
algebraic groups, then $\varphi$ is an open surjective map.
\end{theorem}
\begin{proof}

the surjectivity of $\varphi$ is already known~\cite[Chap.~2,
Sec.~3, Proposition~7]{Cs}. Let us prove openness. It sufficient to
show that the restriction $\varphi\colon G_0\to H_0$ to the identity
component is open. By Statement~\ref{open_var}, there exists an open
set $U\subseteq G_0$ such that the restriction of $\varphi$ to this
set is open. The sets of the form $gU$, where $g\in G_0$, cover
$G_0$. Since multiplication by $g$ is an isomorphism, it follows
that the map $\varphi$ is open everywhere.
\end{proof}

\begin{remark}
The openness of surjective homomorphisms for differential algebraic
groups provides, in particular, a more convenient way to calculate
inverse image of sheaves on groups. Namely, let $f\colon X\to Y$ be
a continuous open map of topological spaces, and let $\mathcal F$ be
a sheaf on $Y$. Then the presheaf $f^{-1}\mathcal F$ on $X$ is
determined by $f^{-1}\mathcal F(U)=\mathcal F(f(U))$.
\end{remark}


\begin{thebibliography}{99}


\bibitem{Cs} P.J. Cassidy, ``Differential algebraic groups,'' Amer.~J.~Math. {\bf 94} (3), 891-954
(1972).


\bibitem{Kl}E.R. Kolchin, {\it Differential Algebra and Algebraic Groups}, in {\it Pure Appl. Math.}
(Academic, New York, 1973), Vol. 54.

\bibitem{Kl2}E.R. Kolchin, {\it Differential Algebraic Groups}, in {\it Pure Appl. Math.}
(Academic, Orlando, FL, 1985), Vol. 114.

\bibitem{AM}M. Atiyah and I. Macdonald, {\it Introduction to Commutative
Algebra} (Addison-Wesley, Reading, Mass., 1969; Faktorial, Moscow,
2003).

\bibitem{Kpl}I. Kaplansky, {\it An Introduction to Differential
Algebra} (Hermann, Paris, 1957; Inostrannaya Literatura, Moscow,
1959).

\bibitem{Tr}D.V. Trushin, ``The ideal of separants in the ring of differential polynomials,'' Fundam. Prikl. Mat. {\bf 13} (1), 215--227 (2007) [J. Math. Sci. (N.Y.) {\bf 152} (4), 595-603 (2008)].

\bibitem{Mu} H. Matsumura, {\it Commutative Algebra}, in {\it Math. Lecture Note Ser.} (Benjamin/Cummings, Reading, Mass., 1980), Vol.
56.

\end{thebibliography}
\end{document}